\newtheorem{theorem}{Theorem}[section]
\newtheorem{lemma}[theorem]{Lemma}
\newtheorem{prop}[theorem]{Proposition}
\theoremstyle{definition}
\newtheorem{definition}[theorem]{Definition}
\newcommand{\bea}{\begin{eqnarray*}}
\newcommand{\eea}{\end{eqnarray*}}
\numberwithin{equation}{section}
\begin{document}
\title[]{An embedding of the unit ball that does not embed into a Loewner chain}

\author[Forn\ae ss]{J. E. Forn\ae ss}
\address{J. E. Forn\ae ss: Department of Mathematics\\
NTNU.}\email{johnefo@math.ntnu.no}

\author[Wold]{E. F. Wold}
\address{E. F. Wold: Department of Mathematics\\
University of Oslo\\
Postboks 1053 Blindern, NO-0316 Oslo, Norway.}\email{erlendfw@math.uio.no}

%
%
\subjclass[2010]{32E20, 32E30, 32H02}
\date{\today}
\keywords{}

\begin{abstract}
We construct a holomorphic embedding $\phi:\mathbb B^3\rightarrow\mathbb C^3$ such 
that $\phi(\mathbb B^3)$ is not Runge in any strictly larger domain.  As a consequence, 
$\mathcal S\neq\mathcal S^1$ for $n=3$.
\end{abstract}

\maketitle

\section{Introduction}

Recall that a Loewner chain is a family $f_t:\mathbb B^n\rightarrow\mathbb C^n$ of holomorphic injections, $f_t(0)=0, f'(0)=e^{t}\cdot\mathrm{id}, t\in [0,\infty)$, with 
$f_t(\mathbb B^n)\subseteq f_s(\mathbb B^n)$ for $t\leq s$.   We let $\mathcal S$ denote the set of all univalent maps $f:\mathbb B^n\rightarrow\mathbb C^n$
with $f(0)=0, f'(0)=\mathrm{id}$, we let $\mathcal S^1$ denote the set of all $f\in\mathcal S$ such that $f$ embeds into a Loewner chain, \emph{i.e.}, 
$f=f_0$ where $(f_t)_{t\geq 0}$ is a Loewner chain, and finally we let $\mathcal S^0$ denote the set of all $f\in\mathcal S^1$
for whom we require that the family $(e^{-t}f_t)_{t\geq 0}$ is normal.   \

In one variable, the three sets coincide, and they are all compact. On the other 
hand, in higher dimensions, the sets $\mathcal S$ and $\mathcal S^1$ are certainly not compact, as can by seen as a consequence 
of the automorphism group of $\mathbb C^n$ being huge for $n\geq 2$.  On the other hand, it is known that $\mathcal S^0$ is compact, and so 
we get the chain of inclusions
\begin{equation}\label{inclusions}
\mathcal S^0 \subsetneq\mathcal S^1\subseteq\mathcal S.
\end{equation}
However, if $f\in\mathcal S^1$, there exist $\psi\in\mathcal I(\mathbb C^n)$ (the set of entire injective maps), and 
$g\in\mathcal S^0$ such that $f=\psi\circ g$, and so we may say that $\mathcal S^1$ splits (see e.g. \cite{Braccietal}, Theorem 2.6.), 
\begin{equation}\label{splits}
\mathcal S^1=\mathcal I(\mathbb C^n)\circ\mathcal S^0.
\end{equation}
The background for this article is that it has been unknown whether it is also the case that $\mathcal S=\mathcal I(\mathbb C^n)\circ\mathcal S^0$, 
or equivalently, whether $\mathcal S=\mathcal S^1$ (this problem was mentioned and discussed in \cite{ArosioBracciWold}).  In this context, the following closely related problem was recently posed by F. Bracci: Let $f\in\mathcal S$.
Does there exist a Fatou-Bieberbach domain $\Omega\subset\mathbb C^n$ such that $f(\mathbb B^n)$ is Runge in $\Omega$?
This turns out not to be the case.  


\begin{theorem}\label{main}
For any $\epsilon>0$ there exists a continuous injective map $\phi:\overline{\mathbb B}^3\rightarrow\mathbb C^3$
with $\phi\in\mathcal O(\mathbb B^3)$, and such that 
\begin{itemize}
\item[(i)] $\|\phi-\mathrm{id}\|_{\overline{\mathbb B}^3}<\epsilon$, and 
\item[(ii)] if $\phi(\mathbb B^3)\subset\Omega$ is a Runge pair, then $\phi(\mathbb B^3)=\Omega$.
\end{itemize}
\end{theorem}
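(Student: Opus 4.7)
The plan is to construct $\phi$ as the uniform limit on $\overline{\mathbb B}^3$ of compositions $\Phi_k = \Psi_k \circ \cdots \circ \Psi_1$ of automorphisms of $\mathbb C^3$, produced by Andersén--Lempert approximation, engineered so that for a countable dense family of points outside $\phi(\mathbb B^3)$ we install a concrete Runge obstruction. Standard estimates --- taking $\|\Psi_k - \mathrm{id}\|$ exponentially small on a suitable exhausting family of compacta containing $\Phi_{k-1}(\overline{\mathbb B}^3)$, and adjusting for injectivity at the boundary --- will yield property (i).

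Enumerate a sequence $(p_k, r_k) \in (\mathbb C^3 \setminus \overline{\mathbb B}^3) \times (0, 2^{-k})$ rich enough that any open $\Omega \supsetneq \phi(\mathbb B^3)$ contains some ball $B(p_k, r_k)$ with $p_k \notin \phi(\overline{\mathbb B}^3)$. For each $k$, fix a complex line $\ell_k$ through $p_k$ passing near the origin, let $\Delta_k := \{z \in \ell_k : |z - p_k| \leq r_k\}$, and fix a small source circle $\gamma_k \subset \mathbb B^3$, the $\gamma_k$'s pairwise disjoint. I would design the $\Psi_k$'s so that in the limit $\phi(\gamma_k) = \partial \Delta_k$; then $\partial \Delta_k \subset \phi(\mathbb B^3)$ and trivially $p_k \in \Delta_k \subset B(p_k, r_k)$. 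Given this, the Runge obstruction is a short maximum principle argument: if $\phi(\mathbb B^3) \subsetneq \Omega$ is a Runge pair, pick $p \in \Omega \setminus \phi(\overline{\mathbb B}^3)$ and then $(p_k, r_k)$ from the sequence with $B(p_k, r_k) \subset \Omega$ and $p_k \notin \phi(\overline{\mathbb B}^3)$. Then $\partial \Delta_k \subset \phi(\mathbb B^3) \subset \Omega$ and $\Delta_k \subset \Omega$, and for any $f \in \mathcal O(\Omega)$ the one-variable maximum principle on $\Delta_k \subset \ell_k \cap \Omega$ gives $|f(p_k)| \leq \sup_{\partial \Delta_k} |f|$, placing $p_k$ in the $\mathcal O(\Omega)$-hull of $\partial \Delta_k$; by Rungeness this hull is contained in $\phi(\mathbb B^3)$, contradicting $p_k \notin \phi(\mathbb B^3)$.

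The main obstacle is building the sequence $(\Psi_k)$ so that $\phi(\gamma_k) \to \partial \Delta_k$ while each intermediate image $\Phi_k(\mathbb B^3)$ remains polynomially convex, so that Andersén--Lempert applies at the next stage. Polynomial convexity would fail at stage $k$ if $\Phi_k(\gamma_k)$ were already the planar circle $\partial \Delta_k$: its polynomial hull in $\mathbb C^3$ would then be the disc $\Delta_k$, which contains $p_k \notin \Phi_k(\mathbb B^3)$. The idea is to approach planarity only in the limit: at each finite stage $k$ the closed curves $\Phi_k(\gamma_j)$ ($j \leq k$) are arranged to be generic (for instance, lying on a totally real surface) and hence polynomially convex, converging to the planar circles $\partial \Delta_j$ only as $k \to \infty$. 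Stage $k$ consists of an Andersén--Lempert automorphism approximating an isotopy of holomorphic injections on a Runge neighbourhood of $\Phi_{k-1}(\overline{\mathbb B}^3)$ that simultaneously nudges each $\Phi_{k-1}(\gamma_j)$ ($j < k$) closer to $\partial \Delta_j$ by a summable amount and introduces $\Phi_k(\gamma_k)$ close to (but not on) $\partial \Delta_k$; all corrections are $2^{-k}$-small on a large compactum containing earlier structure. Dimension $3$ provides the transversal room to maneuver several curves simultaneously without collision, and anchoring each $\Psi_k$-deformation to a small patch of $\partial \mathbb B^3$ disjoint from the patches of later deformations keeps $\phi$ continuous and injective on $\overline{\mathbb B}^3$ in spite of the infinitely many loops accumulating at the boundary.
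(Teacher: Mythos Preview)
Your maximum-principle obstruction---a holomorphic disk with boundary in $\phi(\mathbb B^3)$ but containing a point of $\Omega\setminus\phi(\overline{\mathbb B}^3)$---is exactly the mechanism the paper exploits, following Wermer and Gaussier--Joi\c{t}a. The construction, however, is entirely different. The paper does not use Anders\'en--Lempert; it builds $\phi$ as a limit of explicit compositions based on Wermer's map $(z_1,z_2,z_3)\mapsto(z_1,\,z_1z_2^2+2z_3z_2,\,z_1z_2+z_3)$, suitably rescaled and localized so as to plant a ``W-degenerate'' point at each member of a dense sequence $(\alpha_j)\subset b\mathbb B^3$, while interpolating to second order at the earlier $\alpha_j$'s so that previously installed obstruction disks survive all later perturbations.

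Your Anders\'en--Lempert scheme has a structural problem. If the compositions $\Phi_k$ of global automorphisms converge on an exhaustion of $\mathbb C^3$ (as your phrase ``exhausting family of compacta'' suggests), then the limit $\phi$ lies in $\mathcal I(\mathbb C^3)$, and $\phi(\mathbb B^3)$ is automatically Runge in the strictly larger domain $\phi(\mathbb C^3)$, since $\mathbb B^3$ is Runge in $\mathbb C^3$ and $\phi$ is a biholomorphism onto its image---so the theorem fails for this $\phi$. Hence the $\Phi_k$ must diverge off $\overline{\mathbb B}^3$; but then you lose the very control you were invoking, and in particular you cannot guarantee that your externally chosen points $p_k$ (which must cluster along $b\phi(\overline{\mathbb B}^3)$, an object you do not yet know) stay outside $\phi(\overline{\mathbb B}^3)$. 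A second gap: Anders\'en--Lempert only approximates, so it is not clear how you force the \emph{exact} equality $\phi(\gamma_k)=\partial\Delta_k$; a generic $C^0$-small perturbation of a planar circle is polynomially convex and bounds no analytic disk, so mere approximate planarity of $\phi(\gamma_k)$ is not enough for the hull argument. The paper sidesteps both issues by anchoring the obstructions at image-boundary points (so no a priori exterior $p_k$'s are needed) and by using Wermer's map, which produces the required planar bounding circles exactly rather than as a limit.
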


Since the conditions in Docquier-Grauert \cite{DocquierGrauert} (Definition 20) are satisfied for the increasing family $(f_t(\mathbb B^n))_{0\leq t\leq t_0}$
for any fixed $t_0$, and for any Loewner chain, it follows from \cite{DocquierGrauert} (Satz 17--19) that each pair $(f_0(\mathbb B^n),f_t(\mathbb B^n))$ is a Runge pair, 
and 
we get our second theorem as a corollary: 
\begin{theorem}
For $n=3$ we have that $\mathcal S\neq\mathcal S^1$.
\end{theorem}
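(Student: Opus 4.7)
The theorem is a short formal consequence of Theorem~\ref{main}. The plan is to normalize the map produced by Theorem~\ref{main} so that it lies in $\mathcal S$, to assume for contradiction that it lies in $\mathcal S^1$, and then to derive a contradiction by combining the Docquier--Grauert Runge property of Loewner-chain images (already cited in the text) with the rigidity of automorphisms of the ball.

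I would begin by normalizing. Fix $\epsilon$ small and take the map $\phi$ furnished by Theorem~\ref{main}. Post-compose with the affine biholomorphism $T(w) := (D\phi(0))^{-1}(w - \phi(0))$ of $\mathbb C^3$, which is well-defined for $\epsilon$ sufficiently small, since $D\phi(0)$ is close to the identity. The resulting map $f := T\circ\phi$ is univalent and holomorphic on $\mathbb B^3$ with $f(0)=0$ and $f'(0)=\mathrm{id}$, so $f\in\mathcal S$. Since $T$ is a biholomorphism of $\mathbb C^3$, it preserves Runge pairs, and so the maximality property (ii) transfers directly: if $f(\mathbb B^3)\subset \Omega$ is a Runge pair, then $\phi(\mathbb B^3)\subset T^{-1}(\Omega)$ is a Runge pair, hence equal by (ii), whence $f(\mathbb B^3)=\Omega$.

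Now I would argue by contradiction. Suppose $f\in\mathcal S^1$, so $f=f_0$ for some Loewner chain $(f_t)_{t\geq 0}$. For every fixed $t_0\geq 0$, as noted in the paragraph preceding the statement, the increasing family $(f_t(\mathbb B^3))_{0\leq t\leq t_0}$ fits the Docquier--Grauert framework, so each inclusion $f_0(\mathbb B^3)\subset f_t(\mathbb B^3)$ is a Runge pair. The transferred property (ii) then forces $f_0(\mathbb B^3)=f_t(\mathbb B^3)$ for all $t\geq 0$. Hence $\alpha_t := f_t^{-1}\circ f_0$ is a biholomorphic automorphism of $\mathbb B^3$ fixing the origin, with $\alpha_t'(0)=e^{-t}\cdot\mathrm{id}$ by the chain rule.

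The contradiction comes from the classical fact that every automorphism of $\mathbb B^3$ fixing the origin is a unitary linear map (Schwarz/Cartan lemma), so $\|\alpha_t'(0)\|=1$; this is incompatible with $e^{-t}<1$ for any $t>0$. We conclude $f\in\mathcal S\setminus\mathcal S^1$, proving $\mathcal S\neq\mathcal S^1$ for $n=3$. The genuine difficulty of the paper sits entirely inside Theorem~\ref{main}; the only steps that require any thought in the present deduction are the affine normalization $\phi\mapsto f$ and the explicit invocation of the Docquier--Grauert Runge property, both of which amount to routine bookkeeping.
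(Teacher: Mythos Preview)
Your proof is correct and follows essentially the same route as the paper: deduce the result from Theorem~\ref{main} together with the Docquier--Grauert Runge property of Loewner chains stated just before the theorem. The paper is terser and leaves implicit both the affine normalization needed to place $\phi$ in $\mathcal S$ and the reason why $f_t(\mathbb B^3)\supsetneq f_0(\mathbb B^3)$ for $t>0$; you make both steps explicit, the latter via the Cartan/Schwarz lemma, which is exactly the right way to close the argument.
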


\section{Prelimiaries}

The problem mentioned above was recently studied by Gaussier and Joi\c ta \cite{GaussierJoita}.
In particular, they studied the map

\begin{equation}\label{mainmap}
\phi(z)=(z_1,z_1z_2^2  + 2z_3z_2,z_1z_2 + z_3),
\end{equation}
although we have here changed coordinates to have a fixed point at the origin.  
(This map was constructed by John Wermer \cite{Wermer1959},\cite{Wermer1960} to produce 
a non Runge embedded polydisk in $\mathbb C^3$.)  It is straight forward to check 
that the map $\phi$ is injective holomorphic on the half space 
$$
H:=\{z\in\mathbb C^3:\mathrm{Re}(z_3)<0\}.
$$

For $0<p<1/4$ we set 
\begin{equation}\label{dp}
D_p:=\{z\in\mathbb C^3: 2\mathrm{Re}(z_3) + |z_3|^2 + p(|z_1|^2+|z_2|^2)<0\}.
\end{equation}
Then $D_p$ is biholomorphic to the unit ball, $0\in bD_p$, and $D_p\subset H$. \

The result obtained by Gaussier and Joi\c ta is the following: 
For $r>0$ sufficiently small there exists $0<\alpha<r$
such that the set 
$$
S_{r,\alpha}:=\{z\in\mathbb C^3:|z_1|=r, z_2=0, z_3=\alpha\}
$$
is contained in $\phi(D_p)$.  Note however that none of the disks 
$$
D_{r,\alpha}:=\{z\subset\mathbb C^3:|z_1|<r, z_2=0, z_3=\alpha\}
$$
are contained in $\phi(D_p)$; more specifically the point $(0,0,\alpha)$ is not 
contained in $\overline{\phi(D_p)}$, since $\phi$ restricted to the $z_3$-coordinate line is the identity.  
The following is a consequence.

\begin{prop}(Gaussier-Joi\c ta)
If $\phi(D_p)\subset \Omega$ and if $\Omega$ contains an open neighbourhood of $q$, then 
$\phi(D_p)$ is not Runge in $\Omega$.
\end{prop}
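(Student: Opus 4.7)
The plan is a proof by contradiction. Assume $(\phi(D_p),\Omega)$ is a Runge pair. The idea is to use the explicit ``second sheet'' of $\phi^{-1}$ along the slice $\{(w_1,0,\alpha):w_1\in\C\}$ to produce a holomorphic function on $\phi(D_p)$ whose Runge approximations by elements of $\mathcal{O}(\Omega)$ violate a Cauchy integral identity forced by the neighbourhood of $q=(0,0,\alpha)$ in $\Omega$.

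Concretely, solving $\phi(z)=(w_1,0,\alpha)$ reduces (via $z_3=\alpha-w_1 z_2$) to $w_1 z_2^2-2\alpha z_2=0$, whose two roots give the preimages $(w_1,0,\alpha)\notin D_p$ and $(w_1,2\alpha/w_1,-\alpha)\in D_p$ for $|w_1|=r$ in the Gaussier--Joi\c{t}a annulus. Thus $f:=\pi_2\circ\phi^{-1}\in\mathcal{O}(\phi(D_p))$ equals $2\alpha/w_1$ on $S_{r,\alpha}$. Picking $g_n\in\mathcal{O}(\Omega)$ converging uniformly to $f$ on $S_{r,\alpha}$, we obtain
$$
\oint_{|w_1|=r} g_n(w_1,0,\alpha)\,dw_1 \;\longrightarrow\; \oint_{|w_1|=r}\frac{2\alpha}{w_1}\,dw_1 \;=\; 4\pi i\alpha \;\neq\; 0.
$$
If the slice of $\Omega$ through $(w_2,w_3)=(0,\alpha)$ contained an annulus joining $S_{r,\alpha}$ to a small circle inside $B(q,\delta)\subset\Omega$, we would deform the contour down to that small circle and then contract it inside $B(q,\delta)$, producing $0$ on the left --- the desired contradiction.

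The main obstacle is that the one-dimensional slice of $\Omega$ may be disconnected between the annulus $\{\sqrt a<|w_1|<\sqrt b\}$ coming from $\phi(D_p)\cap\{w_2=0,\,w_3=\alpha\}$ and the disk $\{|w_1|<\delta\}$ coming from the given neighbourhood of $q$. To bridge this gap I would replace the slice argument by the continuity principle for Runge pairs: since the maximum principle gives $\Delta(0)\in\widehat{\Delta(\partial\D)}^{\mathcal{O}(\Omega)}$ for any holomorphic $\Delta:\overline{\D}\to\Omega$, the hull characterisation of the Runge property forces any analytic disk in $\Omega$ with boundary in $\phi(D_p)$ to have its whole image in $\phi(D_p)$. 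The natural disk $\zeta\mapsto(r\zeta,0,\alpha)$ has boundary in $\phi(D_p)$ and centre at $q$; perturbing it slightly in the $w_2,w_3$ directions --- using openness of $\phi(D_p)$ near $S_{r,\alpha}$ and of $\Omega$ near $q$ --- yields a holomorphic disk wholly inside $\Omega$ whose boundary remains in $\phi(D_p)$ and whose centre remains near $q$. The continuity principle then places the perturbed centre in $\phi(D_p)$, and passing to the limit gives $q\in\overline{\phi(D_p)}$, contradicting the fact (noted above, since $\phi$ fixes the $z_3$-axis) that $q\notin\overline{\phi(D_p)}$.
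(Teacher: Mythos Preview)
Your main difficulty stems from reading $q$ as $(0,0,\alpha)$. In the paper $q$ is the origin $0\in b\phi(D_p)$: this is the ``bad'' boundary point, and the prototype of the W-degenerate points defined immediately after the proposition (compare also Proposition~\ref{Wermermod}(iii)). The point is that the Gaussier--Joi\c{t}a statement holds for \emph{every} sufficiently small $r>0$, with a corresponding $0<\alpha<r$, so the closed disks $\overline{D_{r,\alpha}}$ shrink to the origin as $r\to 0$. Given a ball $B_\delta(0)\subset\Omega$, one simply chooses $r$ and $\alpha$ small enough that $\overline{D_{r,\alpha}}\subset B_\delta(0)\subset\Omega$. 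Then either of your two arguments goes through with no obstruction: the disk $\zeta\mapsto(r\zeta,0,\alpha)$ lies entirely in $\Omega$, its boundary $S_{r,\alpha}$ lies in $\phi(D_p)$, and its centre $(0,0,\alpha)$ lies outside $\overline{\phi(D_p)}$, so the hull characterisation of Runge pairs (or, equivalently, your Cauchy integral against the explicit witness $\pi_2\circ\phi^{-1}=2\alpha/w_1$ on $S_{r,\alpha}$) yields the contradiction. This is exactly the one-line argument the paper intends when it says ``the following is a consequence''.

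Under your reading $q=(0,0,\alpha)$ the obstacle you isolate is genuine, but the perturbation fix does not work. A small holomorphic perturbation in the $(w_2,w_3)$ directions moves the whole image disk by a uniformly small amount; it cannot transport the intermediate circles $\{|\zeta|=\rho\}$, which land near $\{|w_1|=r\rho,\ w_2=0,\ w_3=\alpha\}$, into $\Omega$ when $\Omega$ omits a neighbourhood of that set. Nothing rules this out: since $(0,0,\alpha)\notin\overline{\phi(D_p)}$, a connected $\Omega\supset\phi(D_p)\cup B_\delta((0,0,\alpha))$ may join the two pieces by a thin tube far away from the plane $\{w_2=0,\ w_3=\alpha\}$. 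So the perturbation paragraph should be dropped; once $q$ is read as the origin it is unnecessary.
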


Our approach to prove Theorem \ref{main} is to produce an embedding which has similar "bad"
boundary points everywhere on the boundary of the embedded ball.  
More specifically, by a "bad" boundary point we will mean the following. 
\begin{definition}
Let $\Omega\subset\mathbb C^n$ be a domain.  We will say that a point $q\in b\Omega$
is W(ermer)-degenerate, if for any $\delta>0$ there exists an embedded holomorphic
disk $D\subset B_\delta(q)$ such that $bD\subset\Omega$ and $D\not\subseteq\overline\Omega$.
\end{definition}

\section{Modification of the Wermer map}

We let $B\subset\mathbb C^3$ denote the translated unit ball $B=\{z\in\mathbb C^3:2\mathrm{Re}(z_3) + \|z\|^2<0\}$, 
and we let $B'\subset\mathbb C^3$ denote the ball which is scaled by a factor two, $B'=\{z\in\mathbb C^3:2\mathrm{Re}(z_3) + (1/2)\|z\|^2<0\}$.

\begin{prop}\label{Wermermod}
Let $\{\alpha_1,...,\alpha_n\}\subset\overline B\setminus\{0\}$ and let $\epsilon>0$.  Then there 
exists an injective continuous map $\psi:\overline B'\rightarrow\mathbb C^3$ with $\psi\in\mathcal O(\mathbb B^3)$
such that the following holds
\begin{itemize}
\item[(i)] $\|\psi-\mathrm{id}\|_{\overline B'}<\epsilon$, 
\item[(ii)] $(\psi-\mathrm{id})(z)=O(\|z-\alpha_j\|^3)$ for $j=1,...,n$, and 
\item[(iii)] $\psi(0)=0$, and $0$ is W-degenerate for $\psi(B)$.
\end{itemize}
\end{prop}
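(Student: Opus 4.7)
The plan is to write $\psi$ in the multiplicative form
$$
\psi(z) = z + \sigma(z)\,\eta(z)\,\bigl(\Phi(z)-z\bigr),
$$
where $\Phi$ is a rescaled Wermer map designed to give $\Phi(B)$ a W-degenerate origin, $\sigma$ is a holomorphic ``peak function'' concentrating the modification near $0$, and $\eta$ is a polynomial enforcing the triple vanishing at the $\alpha_j$'s. Concretely, pick $p\in(0,1/4)$ and set
$$
\Phi(z) = \bigl(z_1,\; p^{-1}z_1 z_2^2 + 2 z_2 z_3,\; p^{-1}z_1 z_2 + z_3\bigr).
$$
The linear change of variables $w\mapsto(w_1/\sqrt{p}, w_2/\sqrt{p}, w_3)$ carries $B$ biholomorphically onto $D_p$ and conjugates $\Phi$ to the Wermer map $\phi$, so the Gaussier--Joi\c ta phenomenon for $\phi(D_p)$ at $0$ translates into W-degeneracy of $\Phi(B)$ at $0$: for small $r$ and $\alpha=t r^2$ with $t$ in a suitable window, the circles $S_{r,\alpha}$ lie in $\Phi(B)$ (via the Wermer preimages $(r,\,2\alpha p/r,\,-\alpha)\in B$), while the bounding disks $D_{r,\alpha}$ do not, since $(0,0,\alpha)\notin\overline{\Phi(B)}$ as $\Phi$ fixes the $z_3$-axis. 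Next take $\sigma(z)=e^{Nz_3}$: on $\overline{B'}$ one has $\mathrm{Re}(z_3)\leq-\tfrac14\|z\|^2$, so $|\sigma(z)|\leq e^{-N\|z\|^2/4}$ decays exponentially outside any fixed neighbourhood of $0$, with $\sigma(0)=1$. Finally, pick linear functionals $\ell_j$ with $\ell_j(\alpha_j)=1$ (possible as $\alpha_j\neq 0$) and set $\eta(z)=\prod_{j=1}^{n}\bigl(1-\ell_j(z)\bigr)^3$, so $\eta(0)=1$ and $\eta$ has a zero of order $\geq 3$ at each $\alpha_j$.

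Property (i) is verified by splitting $\overline{B'}=\overline{B_\delta(0)}\cup(\overline{B'}\setminus B_\delta(0))$: on the first piece $\|\Phi-\mathrm{id}\|=O(\delta)$ (since $\Phi(0)=0$), so the product is $O(\delta)$; on the complement $|\sigma|\leq e^{-N\delta^2/4}$, so the product is exponentially small in $N$. Take $\delta$ small, then $N$ large. Property (ii) is immediate: $\eta$ already carries the required order of vanishing, and multiplication by the holomorphic factors $\sigma$ and $\Phi-\mathrm{id}$ cannot reduce it. For (iii), $\psi(0)=0$ is clear, and since $\sigma\eta=1+O(\|z\|)$ while $\Phi-\mathrm{id}=O(\|z\|)$ near $0$, we have $\psi-\Phi=O(\|z\|^2)$ (with an $N$-dependent constant). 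A direct substitution/implicit function argument then solves $\psi(z)=(re^{i\theta},0,\alpha)$ in $B$ for all $\theta$ and all $r$ sufficiently small (with $\alpha=tr^2$, $t$ in the Wermer window, and $p$ chosen small enough in terms of $N$), yielding $S_{r,\alpha}\subset\psi(B)$ in arbitrarily small neighbourhoods of $0$. That $(0,0,\alpha)\notin\overline{\psi(B)}$ for $\alpha>0$ follows from $\psi$ fixing the $z_3$-axis (as $(\Phi-\mathrm{id})|_{\{z_1=z_2=0\}}\equiv 0$): any sequence $z_n\in B$ with $\psi(z_n)\to(0,0,\alpha)$ forces $z_{n,1}\to 0$, hence $z_{n,3}\to\alpha$ via the third-component equation, contradicting $z_n\in B$.

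The main technical obstacle is injectivity of $\psi$ on $\overline{B'}$. Outside $B_\delta(0)$, Cauchy estimates show that $\sigma$ and $D\sigma$ both decay exponentially in $N$, so $\psi$ is $C^1$-close to the identity and hence injective there. Near $0$, $\psi$ is not $C^1$-close to the identity (since $D\sigma$ is of order $N$), but $\psi$ preserves the first coordinate, so injectivity reduces to a $2$-dimensional fibrewise analysis on each $\{z_1=r\}$, carried out using the explicit form of the equations and the injectivity of $\Phi$ on $\overline{B'}\setminus\{0\}\subset\overline{H}$. Cross-region collisions $\psi(z)=\psi(z')$ with $z$ near $0$ and $z'$ far are ruled out by scale comparison: $\psi(z)$ stays within $O(\|z\|)$ of the origin for small $z$, while $\psi(z')\approx z'$ remains outside that region for $z'$ away from $0$.
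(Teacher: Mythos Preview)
Your construction is genuinely different from the paper's. The paper obtains $\psi$ as a \emph{composition}
\[
F_{N_1,N_2,\delta_1,\delta_2}=(F_1^{\delta_1})^{-1}\circ\phi\circ F_2^{N_1}\circ F_3^{N_2,\delta_2}\circ F_1^{\delta_1},
\]
where each factor is checked to be injective on the half-space $H$; injectivity of the final map on $\overline{B'}$ is therefore automatic, and condition (ii) is imposed afterwards by a small interpolation correction. Your additive ansatz $\psi=\mathrm{id}+\sigma\eta(\Phi-\mathrm{id})$ packages (i), (ii) and the $W$-degeneracy mechanism into one formula, which is appealing, but it shifts the entire burden onto the injectivity claim --- and that is where there is a real gap.

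The problem is the region near the origin. You compare $\psi$ with $\Phi$ there, but $D\Phi(0)$ is singular (its second row vanishes), so $\Phi$ has no uniform injectivity modulus on $B_\delta(0)\cap\overline{B'}$: along $\{z_1=0\}$ one has $\|\Phi(z)-\Phi(z')\|=2|z_3|\,|z_2-z_2'|$, with $|z_3|$ allowed to be as small as $c|z_2|^2$. On the other hand $\psi-\Phi=(\sigma\eta-1)(\Phi-\mathrm{id})$, and since $D\sigma$ carries a factor $N$, the $C^1$-norm of $\psi-\Phi$ on $B_\delta(0)$ is of order $N\delta$. A perturbation argument would require $N\delta$ to be small relative to the degenerating injectivity modulus of $\Phi$, which is incompatible with taking $N$ large relative to $\delta$ as (i) demands. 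Your fibrewise reduction to $\{z_1=c\}$ does not obviously rescue this. With $\eta\equiv 1$ and $c=0$ the question reduces to whether $1+e^{Nz_3}(2z_3-1)$ vanishes on the left half-plane --- amusingly, this is exactly the non-vanishing of the paper's auxiliary function $f_N$, so that particular slice is fine --- but once $\eta$ depends on $z_2$ the fibre map $z_2\mapsto z_2\cdot h(z_2,z_3)$ is no longer linear and its injectivity is simply not addressed. The remark that ``$p$ is chosen small enough in terms of $N$'' also sits uneasily with the implicit constraint $\delta\lesssim p$ needed for your $O(\delta)$ estimate in (i); in fact the $W$-degeneracy step only needs $r$ small in terms of $N$ and $p$, not $p$ small in terms of $N$. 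None of this says your map cannot be made injective, but as written the argument for injectivity near $0$ is a genuine gap rather than a routine verification, and it is precisely the issue the paper's compositional construction is designed to avoid.
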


\begin{proof}
We will compose the map 
\begin{equation}
\phi(z_1,z_2,z_3)=(z_1,z_1z_2^2+2z_3z_2,z_1z_2 + z_3)
\end{equation}
with several holomorphic embeddings to achieve the claims of the theorem.  

For $N\in\mathbb N$ we set 
$$
f_N(z_3)=\frac{1}{2z_3} + e^{N(2z_3)}(1-\frac{1}{2z_3}), 
$$
and 
$$
h^\delta_N(z_3)= \frac{1}{2\delta}\frac{e^{N(2z_3)}+(2\delta-1)}{1 + (2\delta-1)e^{N(2z_3)}} + \frac{1}{2\delta}.
$$
The maps $h^\delta_N$ map the left half space to the disk of radius $1/2\delta$ centred at the point
$1/2\delta$, and $-\infty$ is mapped to the point 1.  So for a fixed $\delta$ we have that $h^\delta_N\rightarrow 1$
uniformly on compact subsets of the left half space, as $N\rightarrow\infty$.  \

Then we set 
$$
F^{\delta}_1(z)=(\delta z_1,\delta z_2,z_3), 
$$
$$
F^N_2(z)=(z_1,z_2f_N(z_3),z_3), 
$$
$$
F^{N,\delta}_3(z)=(z_1h^\delta_N(z_3),z_2h^\delta_N(z_3),z_3).
$$
Then we set 
$$
F_{N_1,N_2,\delta_1,\delta_2}= (F_1^{\delta_1})^{-1}\circ\phi\circ F_2^{N_1}\circ F_3^{N_2,\delta_2}\circ F_1^{\delta_1}.
$$
\begin{lemma}
We have that $F_{N_1,N_2,\delta_1,\delta_2}:H\rightarrow\mathbb C^3$ is injective holomorphic for all sufficiently large $N_1\in\mathbb N$.
Moreover, we have that  $F_{N_1,N_2,\delta_1,\delta}$ converges uniformly to the identity as 
$N_1\rightarrow\infty$, $\delta_1\rightarrow 0$, $\delta_2\rightarrow 0$ and $N_2\rightarrow\infty$, 
and we may arrange that $0$ is W-degenerate for $F_{N_1,N_2,\delta_1,\delta_2}(B)$.  (We note that the 
rate of convergence of each of the quantities, depends on the previous one.)
\end{lemma}
\begin{proof}
To prove that $F_{N_1,N_2,\delta_1,\delta_2}:H\rightarrow\mathbb C^3$ is injective, we need to check that $f_N$ and $h_N^\delta$ are 
both non-zero on $\tilde H=\{\mathrm{Re}(z_3)<0\}$ for sufficiently large $N\in\mathbb N$.   We leave it to the reader to check that 
$h_N^\delta$ maps $\tilde H$ to the disk of radius $1/2\delta$ centred at the point $1/2\delta$.  \

Suppose that $f_N(z_3)=0$.   Then $z_3\neq 1/2$.  Multiplying by $2z_3$ we get that 
\begin{align*}
1 + e^{N(2z_3)}(2z_3-1) = 0 & \Rightarrow 2Nz_3 + \log (1-2z_3) = 0\\
& \Rightarrow 2z_3  + \log (1-2z_3)/N = 0.
\end{align*}
The last expression converges uniformly to the function $2z_3$ on compact subsets of 
$\{\mathrm{Re}(z_3)<1/4\}$, so for a given compact set $K$ containing $z_3=0$, the 
only zero on $K$ is the point $z_3=0$.  \

Nexy we check the convergence to the identity.   
We have 
$$
G(z):=F_2^\delta(F^N_3(F_1^\delta(z)))=(\delta_1 z_1h^{\delta_2}_{N_2}(z_3),\delta_1 z_2h^{\delta_2}_{N_2}(z_3)f_N(z_3),z_3).
$$
Further 
\begin{align*}
\phi(G(z)) = (&\delta_1 z_1h^{\delta_2}_{N_2}(z_3), \delta_1^3z_1z_2^2(h^{\delta_2}_{N_2}(z_3))^3f_{N_1}(z_3)^2 + \delta_1 z_2 h^{\delta_2}_{N_2}(z_3)f_{N_1}(z_3)(2z_3),\\
& \delta_1^2z_1z_2h^{\delta_2}_{N_2}(z_3)^2f_{N_1}(z_3) + z_3),
\end{align*}
and 
\begin{align*}
F_{N_1,N_2,\delta_1,\delta_2}(z)=(& z_1h^{\delta_2}_{N_2}(z_3), \delta_1^2z_1z_2^2(h^{\delta_2}_{N_2}(z_3))^3f_{N_1}(z_3)^2 +  z_2 h^{\delta_2}_{N_2}(z_3)f_{N_1}(z_3)(2z_3), \\
& \delta_1 z_1z_2h^{\delta_2}_{N_2}(z_3)^2f_{N_1}(z_3) + z_3).
\end{align*}
We now explain how to choose all the constants to get convergence to the identity.  
Note that $f_{N_1}(z_3)2z_3$ is bounded independently of $N_1$ and that $f_{N_1}(z_3)2z_3\rightarrow 1$
uniformly on compact subsets of $B'\setminus\{0\}$ as $N_1\rightarrow\infty$.  So fix a large $N_1$.
This will cause the other terms containing $f_{N_1}(z_3)$ to grow, but this growth may now 
be eliminated by choosing $\delta_1$ small.    Next, before choosing $\delta_2$ we consider the image 
$G(B)$ near the origin after such a choice is made.  Note that $f_{N_1}(0)=(1-N_1)$ and that $h^{\delta_2}_{N_2}(0)=1/\delta_2$.  This implies that $G(B)$ has 
a defining function 
$$
2\mathrm{Re}(z_3) + |z_3|^2  + \delta_2/\delta_1 |z_1|^2 + (\delta^2_2/(N_1-1)^2\delta_1)|z_2|^2 + O(|z_3|(|z_1|^2+|z_2|^2))<0.
$$
So if we choose $\delta_2$ sufficiently small, we see that $D_p\subset G(B)$, which will cause the origin
to be a W-degenerate point for $F_{N_1,N_2,\delta_1,\delta_2}(B)$.  Choosing a small 
$\delta_2$ will cause growth in all the other terms containing $h^{\delta_2}_{N_2}$, but 
this is finally "localised" to the origin by choosing a sufficiently large $N_2$.
\end{proof}

Due to the lemma, we have now proved the proposition except for the claim (ii).   However, it is 
easy to explicitly construct an interpolation operator depending continuously on the input, that corrects the map at the points 
$\alpha_1,...,\alpha_n$.

\end{proof}
\section{Proof of Theorem \ref{main}}

Let $\{\alpha_0,\alpha_1,\alpha_2,...\}\subset b\mathbb B^3$ be a dense set of points.   For $R>1$ we will let $B(j,R)$
denote the ball in $\mathbb C^3$ containing $\mathbb B^3$ with the common boundary point $\alpha_j$.
Then, for $\delta>0, n\in\mathbb N$, we set 
$$
\Omega(R,n,\delta)=\mathbb B^3(\delta)\setminus \cup_{j=1}^n B(j,R)^c).
$$
We will construct by induction embedded holomorphic disks $D_j\in\mathbb C^3$, $\delta_j,\epsilon_j>0$, and injective continuous maps $\phi_j:\overline\Omega(1+1/j,j,\delta_j)\rightarrow\mathbb C^3$,
$\psi_j\in\mathcal O(\Omega(1+1/j,j,\delta_j))$, such that the following holds
\begin{itemize}
\item[($a_n$)] $\|\phi_j-\phi_{j-1}\|_{\overline{\mathbb B}^3}<\epsilon_j$ for $j=1,2,...,n$ ($\phi_0=\mathrm{id}$),
\item[($b_n$)] $\phi_j(\alpha_k)=\phi_{j-1}(\alpha_k)$ for $k=0,1,2,...,j-1, j=1,2,...,n$,  
\item[($c_n$)] $D_j\subset D_{(1/2)^j}(\alpha_{j-1})$ for $j=1,...,n$, 
\item[($d_n$)] $bD_j\subset \phi_n(\mathbb B^3)$ for $j=1,...,n$, and 
\item[($e_n$)] $D_j\not\subseteq\overline{\phi_n(\mathbb B^3)}$ for $j=1,...,n$.
\end{itemize}
As a preliminary choice of $\{\epsilon_j\}$ we set $\epsilon_j=\epsilon\cdot (1/2)^{j+1}$.  This is just 
to ensure (i) in the theorem, as our plan is to define 
\begin{equation}\label{composition}
\phi:=\lim_{j\rightarrow\infty}\phi_j,
\end{equation}
after we explain the inductive procedure.  Each $\epsilon_j$ will however be further decreased throughout the process.  
Note in particular, that if the sequence decreases sufficiently fast, then the map $\phi:\overline{\mathbb B}\rightarrow\mathbb C^3$
will be injective.   To start the induction, we let $\phi_1$ be the map furnished by Proposition \ref{Wermermod}, creating 
a W-degenerate point at $\alpha_0$, and such 
that ($a_1$) and ($b_1$) hold. This means that there exists $D_1$ such that ($c_1$)--($e_1$) hold.  \

Assume now that ($a_n$)--($e_n$) hold for some $n\geq 1$.  Decrease $\epsilon_j$ for $j>n$ such that 
any limit $\phi$ defined as in \eqref{composition} will satisfy
\begin{itemize}
\item[($d_\infty$)] $bD_j\subset \phi(\mathbb B^3)$ for $j=1,...,n$, and 
\item[($e_\infty$)] $D_j\not\subseteq\overline{\phi(\mathbb B^3)}$ for $j=1,...,n$.
\end{itemize}
Next we let $\tilde\phi_{n+1}:\overline{\Omega(1+1/(n+1),n+1,\delta_{n+1})}\rightarrow\overline{\Omega(1+1/n,n,\delta_n)}$
be a map furnished by Proposition \ref{Wermermod}, creating a W-degenerate point at $\alpha_n$, and such that ($a_{n+1}$) and ($b_{n+1}$) holds
for the composition $\phi_{n+1}=\phi_n\circ\tilde\phi_{n+1}$.  Note that $\Omega(1+1/(n+1),n+1,\delta_{n+1})\subset\Omega(1+1/n,n,\delta_n)$
as soon as $\delta_{n+1}<\delta_n$, and note that the existence of $\tilde\phi_{n+1}$ uses both the approximation property and the 
interpolation properties at the points $\alpha_0,...,\alpha_{n-1}$.  Finally choose a disk $D_{n+1}$ such that 
($c_{n+1}$)--($e_{n+1}$) hold.  \

This completes the induction step, and we now define $\phi$ is in \eqref{composition}.   To complete
the proof, assume that  $\phi(\mathbb B^3)\subset\Omega$, 
and suppose there exists a point $p\in b\phi(\mathbb B^3)$ and a $\delta>0$ such that $B_\delta(p)\subset\Omega$.
Then by ($c_\infty$) there exists a disk $D_j\subset B_\delta(p)$ which has the properties ($d_\infty$)--($e_\infty$), 
which implies that $\phi(\mathbb B^3)$ is not Runge in $\Omega$.
$\hfill\square$

\bibliographystyle{amsplain}

\end{document}